\newtheorem{theorem}{Theorem}
\newtheorem{acknowledgement}[theorem]{Acknowledgement}
\newtheorem{lemma}[theorem]{Lemma}
\newenvironment{proof}[1][Proof]{\noindent\textbf{#1.} }{\ \rule{0.5em}{0.5em}}
\begin{document}

\author{Ismail Naci Cangul, Gokhan Soydan, Yilmaz Simsek}
\title{A p-ADIC LOOK AT THE DIOPHANTINE EQUATION $x^{2}+11^{2k}=\allowbreak
y^{n}$}
\date{}
\maketitle

\begin{abstract}
We find all solutions of Diophantine equation $x^{2}+11^{2k}=\allowbreak
y^{n},$ $x\geq 1,$ $y\geq 1,$ $k\in 
\mathbb{N}
,$ $n\geq 3$. We give $p$-adic interpretation of this equation.

\textbf{2000 Mathematics Subject Classification:} 11D41, 11D61

\textbf{Keywords: }Exponential diophantine equations, primitive divisors

\smallskip
\end{abstract}

\section{\protect\bigskip {\protect\normalsize Introduction}}

In this paper, we consider the equation%
\begin{equation}
x^{2}+11^{2k}=y^{n},\text{ }x\geq 1,\text{ }y\geq 1,\text{ }k\geq 1,\text{ }%
n\geq 3.  \tag*{(1.1)}  \label{1.1}
\end{equation}%
Our main result is the following.

\begin{theorem}
Equation \ref{1.1} has only one solution%
\begin{equation*}
n=3\text{ \ \ \textit{and \ }}(x,y,k)=(2\cdot 11^{3\lambda },5\cdot
11^{2\lambda },1+3\lambda )
\end{equation*}%
where $\lambda \geq 0$ is any integer.
\end{theorem}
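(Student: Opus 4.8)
The plan is to reduce first to the ``primitive'' case $\gcd(x,11)=1$, then to reduce the exponent to $n=4$ or $n$ an odd prime, disposing of $n=4$ by an elementary factorisation and of the odd‑prime case with the arithmetic of $\mathbb{Z}[i]$ together with the Bilu--Hanrot--Voutier (BHV) primitive divisor theorem.

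\emph{Reduction to the primitive case.} First I would compare $11$-adic valuations in \ref{1.1}. If $11\mid x$ then (as $k\ge 1$) $11\mid y$; writing $x=11^{a}x_{0}$, $y=11^{b}y_{0}$ with $11\nmid x_{0}y_{0}$, the left‑hand side has $11$-adic valuation exactly $\min(2a,2k)$ (here one uses, when $a=k$, that $11\nmid x_{0}^{2}+1$ since $-1$ is a non‑residue$\pmod{11}$), so $bn=\min(2a,2k)$. If $a\ge k$, dividing by $11^{2k}$ leaves $(11^{a-k}x_{0})^{2}+1=y_{0}^{n}$ with $n\ge 3$, which has no solution in positive integers by Lebesgue's theorem on $X^{2}+1=Y^{n}$; hence $a<k$, and dividing by $11^{2a}$ yields a solution of $x_{0}^{2}+11^{2(k-a)}=y_{0}^{n}$ with $11\nmid x_{0}$ and $bn=2a$. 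Run backwards, every solution is obtained from a primitive one $(x_{0},y_{0},k_{0},n)$ via $(x,y,k)=(11^{a}x_{0},11^{b}y_{0},k_{0}+a)$ subject to $bn=2a$; for $(x_{0},y_{0},k_{0},n)=(2,5,1,3)$ this forces $(a,b)=(3\lambda,2\lambda)$, which reproduces exactly the family in the theorem. It therefore suffices to show the only primitive solution is $(x,y,k,n)=(2,5,1,3)$.

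\emph{Reduction of the exponent; the case $n=4$.} Any $n\ge 3$ is divisible by $4$ or by an odd prime $p$, and then $(x,y^{n/q},k)$ is again a primitive solution for the exponent $q\in\{4\}\cup\{\text{odd primes}\}$; so I may assume $n=4$ or $n=p$ odd prime (and afterwards a primitive solution with arbitrary $n\ge 3$ reduces through exponent $3$ to $y^{n/3}=5$, forcing $n=3$). For $n=4$, $(y^{2}-x)(y^{2}+x)=11^{2k}$ makes both factors powers of $11$; if the smaller were $\ge 11$ then $11\mid 2x$, against primitivity, so $y^{2}-x=1$ and $(11^{k})^{2}-2y^{2}=-1$. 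But any solution of $z^{2}-2y^{2}=-1$ with $11\mid z$ gives $y^{2}\equiv 6\pmod{11}$, impossible; so $n=4$ yields nothing.

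\emph{The case $n=p$ odd prime.} Since $11^{2k}\equiv 1\pmod 8$, reducing mod $8$ forces $x$ even and $y$ odd, so in $\mathbb{Z}[i]$ the factors $x\pm 11^{k}i$ of $y^{p}$ have odd norm and are coprime; as $\mathbb{Z}[i]$ is a UFD and every unit is a $p$-th power ($p$ odd), $x+11^{k}i=(a+bi)^{p}$ with $y=a^{2}+b^{2}$. Comparing imaginary parts gives $11^{k}=b\cdot B$ with $B=\sum_{l\ge 0}\binom{p}{2l+1}a^{p-1-2l}(-b^{2})^{l}$, so $b=\pm 11^{m}$, $0\le m\le k$; one checks $\gcd(a,b)=1$ (else $11\mid x$), $a\ne 0$ (else $x=0$), and $a\ne\pm b$ (else $y$ even), so $\alpha=a+bi,\ \beta=a-bi$ form a Lucas pair with $L_{p}:=(\alpha^{p}-\beta^{p})/(\alpha-\beta)=11^{k}/b=\pm 11^{k-m}$. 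Using $L_{p}\equiv p\,a^{p-1}\pmod{11}$ when $11\mid b$, a short case analysis on $m$ shows $b=\pm 1$, apart from the cases $m=k$ (where $L_{p}=\pm 1$: e.g.\ for $p=3$, $L_{3}=3a^{2}-11^{2k}=\pm 1$ is ruled out mod $3$ in one sign and, in the other, reduces to $(11^{k})^{2}-12c^{2}=1$, impossible mod $11$) and $p=11$, which are settled separately. In the main branch $L_{p}=\pm 11^{k}$ is divisible by no prime other than $11$, so by BHV it has a primitive prime divisor for $p\ge 5$ outside the explicit finite list of exceptional Lucas pairs; that divisor being $11$ forces $11\equiv\pm 1\pmod p$, hence $p\in\{3,5\}$ after checking the finitely many exceptions. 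For $p=5$, $L_{5}=5(a^{2}-1)^{2}-4=\pm 11^{k}$, which a congruence reduction turns into $W^{2}-5(a^{2}-1)^{2}=-4$ with $W=11^{k/2}$ — a power of $11$ in the Lucas‑number sequence $1,4,11,29,\dots$ — forcing $W=11$, $a^{2}=6$, a contradiction. For $p=3$, $3a^{2}-1=11^{k}$ forces $k$ odd, then $(11^{k}+1)/12$ to be a perfect square, and a recurrence/valuation argument leaves only $k=1$, $a=\pm 2$, giving $x=\mathrm{Re}((2+i)^{3})=2$, $y=5$, $n=3$.

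\emph{The main obstacle.} The valuation reduction and the $n=4$ case are routine; the work is concentrated in the odd‑prime case. The two genuinely delicate points there are (a) invoking BHV correctly — pinning $p$ down to $\{3,5\}$ and disposing of the finitely many exceptional Lucas pairs and of the stray exponent $p=11$ — and (b) clearing the residual equations $3a^{2}=11^{k}+1$ (for $p=3$) and $5(a^{2}-1)^{2}\mp 4=11^{k}$ (for $p=5$), each of which ultimately asks whether a binary recurrence sequence contains a nontrivial power of $11$; these require either a further primitive‑divisor argument or a carefully chosen modulus, not a one‑line congruence.
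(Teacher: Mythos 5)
Your overall strategy is the same as the paper's (11-adic valuation plus Lebesgue for the reduction to primitive solutions, a factorization for $n=4$, and $\mathbb{Z}[i]$ together with Bilu--Hanrot--Voutier for the odd-prime exponents), and some of your shortcuts are correct and cleaner than the paper's: killing $n=4$ by noting that $11\mid X$ in $X^{2}-2Y^{2}=-1$ forces $Y^{2}\equiv 6\pmod{11}$, a nonresidue, and killing the $b=\pm 11^{k}$ subcase of $p=3$ by a similar residue computation, both avoid the Pell-sequence checks in the paper. But the proposal has two genuine gaps, precisely at the points you yourself label ``the main obstacle,'' so the proof is not closed. The first and most serious is the equation $3a^{2}=11^{k}+1$ (your main branch of $p=3$): ``a recurrence/valuation argument leaves only $k=1$'' is not an argument, and no single congruence can work since $k=1$ \emph{is} a solution. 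The paper's treatment is the real content of the theorem: write $k=2m+1$, $X=3a$, $Y=11^{m}$, so $X^{2}-33Y^{2}=3$ and $Y=\pm y_{r}$ where $y_{-1}=-1$, $y_{0}=1$, $y_{r+1}=46y_{r}-y_{r-1}$; modulo $11$ this sequence has period $11$ and $11\mid y_{r}$ exactly when $r\equiv 5\pmod{11}$, and since $y_{5}=210044879=11\cdot 373\cdot 51193$, every such $y_{r}$ is also divisible by $373$ and $51193$, so $y_{r}=\pm 11^{m}$ forces $m=0$, i.e.\ $k=1$. Some argument of this strength (or a further primitive-divisor application) must be supplied.

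The second gap is created by your use of the weak form of the primitive-divisor congruence. You only invoke $q\equiv\pm 1\pmod p$, which leaves $p=5$ alive and then requires the unproved claim that $11$ is the only power of $11$ among the numbers $1,4,11,29,\dots$ solving $W^{2}-5(a^{2}-1)^{2}=-4$ (your ``forcing $W=11$''); that is again a question about powers of $11$ in a binary recurrence and needs its own proof. Likewise ``$p=11$'' and the exceptional pairs are only asserted to be ``settled separately.'' The paper avoids all of this with the sharp form of the congruence: a primitive prime divisor $q$ of $u_{p}$ satisfies $p\mid q-(-1\mid q)$; here $q=11$ and $(-1\mid 11)=-1$ because $11\equiv 3\pmod 4$, so $p\mid 12$, impossible for a prime $p\geq 5$. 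And when $11\mid b$ or $L_{p}=\pm 1$, the Lucas number $L_{p}$ has no primitive divisor at all, so $(\alpha,\overline{\alpha},p)$ would have to be an exceptional triple in the BHV table, none of which have roots in $\mathbb{Z}[i]$ --- this disposes of your stray $p=11$ as well. Tightening the congruence this way eliminates the $p=5$ branch entirely; what then remains to be filled is exactly the $3a^{2}=11^{k}+1$ analysis described above.
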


\section{\protect\normalsize Reduction to Primitive Solution}

\qquad Note that it sufficies to study \ref{1.1} when $gcd(x,y)=1$. Such
solutions are called \textit{primitive.} Let $(x,y,k,n)$ be a non primitive
solution. Let $x=11^{a}\cdot x_{1}$, $y=11^{b}\cdot y_{1}$ with $a\geq
1,~b\geq 1$ and $11\nmid x_{1}y_{1}$. \ref{1.1} becomes%
\begin{equation}
11^{2a}x_{1}^{2}+11^{2k}=11^{nb}y_{1^{{}}}^{n}.  \tag*{(2.1)}  \label{2.1}
\end{equation}%
We have either $2k=nb\leq 2a$ or $2a=nb<2k$. First case leads to $%
X^{2}+1=Y^{n},$ $X=11^{a-k}x_{1}$ and $Y=y_{1},$ which has no solution by
Lebesgue's result, and second leads to $X^{2}+11^{2k_{1}}=Y^{n},X=x_{1},$ $%
Y=y_{1}$ and $2k_{1}=2k-2a=2k-nb$. $(X,Y,k_{1},n)$ is a solution of \ref{1.1}
and a primitive solution is $(2,5,1,3).$ If $(x_{1},y_{1},k_{1},n)=(2,5,1,3)$%
, then $2k=2+2a=2+3b$ and hence $a=3\lambda $ and $b=2\lambda $ for $\lambda
\in N.$ Now $(x,y,k,n)=(2\cdot 11^{3\lambda },5\cdot 11^{2\lambda
},1+3\lambda ,3)$. It remains to prove that the only primitive solution is
indeed $(2,5,1,3).$

\section{\protect\normalsize The Case When n=3}

\begin{lemma}
\label{lemma (n=3)} The only primitive solution of \ref{1.1} with $n=3$ is $%
(2,5,1).$
\end{lemma}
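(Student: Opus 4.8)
The plan is to factor equation \ref{1.1} for $n=3$ over the Gaussian integers and thereby reduce to two Pell-type equations, one of which is disposed of by a congruence and the other by the theory of primitive divisors.

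I would start with the elementary reductions. Since $\gcd(x,y)=1$ and $11^{2k}$ is odd, reducing $x^{2}+11^{2k}=y^{3}$ modulo $8$ (with $11^{2k}\equiv 1\pmod 8$) forces $y$ odd and $x$ even, and $11\mid x$ would give $11^{2}\mid x^{2}+11^{2k}=y^{3}$, hence $11\mid y$, against primitivity; so $11\nmid x$. In the UFD $\mathbb{Z}[i]$ the two factors of $(x+11^{k}i)(x-11^{k}i)=y^{3}$ are coprime, since a common prime divisor would divide $\gcd(2x,2\cdot 11^{k})=2$ (a prime above $2$) and hence give $2\mid N(x+11^{k}i)=y^{3}$; so each factor is a unit times a cube, and since every unit of $\mathbb{Z}[i]$ is a cube we may write $x+11^{k}i=(a+bi)^{3}$ with $a,b\in\mathbb{Z}$ (whence $y=a^{2}+b^{2}$ by taking norms). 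Expanding,
\begin{equation*}
x=a(a^{2}-3b^{2}),\qquad 11^{k}=b(3a^{2}-b^{2}),
\end{equation*}
where $\gcd(a,b)=1$ (a common prime divides $x$ and $11^{k}$, hence equals $11$ and divides $x$, impossible), so $\gcd(b,3a^{2}-b^{2})=\gcd(b,3)$ divides $11^{k}$ and equals $1$. Thus the coprime integers $|b|$ and $|3a^{2}-b^{2}|$ multiply to $11^{k}$, so one of them is $1$.

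This gives two cases. If $|b|=1$, then $b=-1$ is impossible (it would force $3a^{2}-1=-11^{k}<0$), so $b=1$ and $3a^{2}-1=11^{k}$; call this (E1). If $|3a^{2}-b^{2}|=1$, then $b=\pm 11^{k}$, and $3a^{2}-b^{2}\equiv -11^{2k}\equiv -1\pmod 3$ forces $3a^{2}-b^{2}=-1$, hence $b=-11^{k}$ and $(11^{k})^{2}-3a^{2}=1$; call this (E2). Equation (E2) has no solution with $k\geq 1$: every positive $u$ with $u^{2}-3v^{2}=1$ is a term of the sequence $u_{0}=1$, $u_{1}=2$, $u_{n+1}=4u_{n}-u_{n-1}$, which is purely periodic modulo $11$ with values $1,2,7,4,9,10,9,4,7,2$, hence never $\equiv 0$, whereas $u=11^{k}$ is divisible by $11$. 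In (E1), $k=1$ gives $a=\pm 2$, so $x=2$ and $y=a^{2}+1=5$, the solution $(2,5,1)$; and $11^{k}+1\equiv 0\pmod 3$ shows any other solution of (E1) has $k$ odd.

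The real work is ruling out (E1) for odd $k\geq 3$, and no finite set of congruences can do this (for instance, modulo every power of $2$ the class $k\equiv 1$ survives), so I would invoke primitive divisors. Writing $k=2\ell+1$ and multiplying $3a^{2}-11\,(11^{\ell})^{2}=1$ by $3$ gives $(3a)^{2}-33\,(11^{\ell})^{2}=3$. Since $\mathbb{Q}(\sqrt{33})$ has class number $1$ and every element of norm $3$ there is, up to units, an associate of $6-\sqrt{33}$, the solutions of $A^{2}-33C^{2}=3$ have $\sqrt{33}$-coordinate running through the non-degenerate binary recurrent sequence $c_{0}=1$, $c_{1}=47$, $c_{n+1}=46c_{n}-c_{n-1}$, governed by the fundamental unit $23+4\sqrt{33}$; one has to show no $|c_{n}|$ with $n\geq 1$ is a power of $11$. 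After the standard passage to the associated Lucas (in fact Lehmer) sequence, the Bilu--Hanrot--Voutier theorem on primitive divisors guarantees a primitive prime divisor --- so a prime factor other than $11$ --- in every term past a small explicit bound, leaving only finitely many small terms ($1,47,2161,99359,\dots$, none a higher power of $11$) to be checked directly; alternatively one could use a lower bound for linear forms in logarithms. This removes (E1) for $\ell\geq 1$, so $(2,5,1)$ is the only primitive solution. I expect this last step --- identifying the recurrence with a Lucas/Lehmer sequence in exactly the form the primitive-divisor results require --- to be the main difficulty.
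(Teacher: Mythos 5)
Your reduction follows the same skeleton as the paper's: factor $x^{2}+11^{2k}=y^{3}$ in $\mathbb{Z}[i]$, write $x+11^{k}i=(a+bi)^{3}$, and split $11^{k}=b(3a^{2}-b^{2})$ into the two cases $3a^{2}=11^{k}+1$ (your (E1)) and $(11^{k})^{2}-3a^{2}=1$ (your (E2)); your elementary preliminaries (parity, $11\nmid x$, coprimality of the Gaussian factors, every unit being a cube) are correct and in fact spelled out more carefully than in the paper. For (E2) you diverge to your advantage: the paper invokes Carmichael's primitive divisor theorem for the sequence $X_{m}$ attached to $X^{2}-3Y^{2}=1$, whereas your observation that $X$ runs through $u_{0}=1$, $u_{1}=2$, $u_{n+1}=4u_{n}-u_{n-1}$, which mod $11$ cycles through $1,2,7,4,9,10,9,4,7,2$ and is never $0$ (equivalently, $-3^{-1}\equiv 7$ is a quadratic nonresidue mod $11$), settles that case completely and more simply.

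For (E1) your route genuinely differs from the paper's, and this is where the one real gap sits. The paper writes $X+\sqrt{33}\,Y=(6+\sqrt{33})(23+4\sqrt{33})^{r}$, reduces the resulting sequence $y_{r}$ mod $11$ to force $r\equiv 5\pmod{11}$, and then uses $y_{5}=11\cdot 373\cdot 51193$ to show every such $y_{r}$ carries the primes $373$ and $51193$, hence is never a power of $11$; no appeal to Bilu--Hanrot--Voutier is made there. You instead want BHV, but the sequence you exhibit, $c_{0}=1$, $c_{1}=47$, $c_{n+1}=46c_{n}-c_{n-1}$ with roots $23\pm 4\sqrt{33}$, is not itself a Lucas or Lehmer sequence (a Lucas sequence with those roots starts $0,1,46,\dots$), so the primitive divisor theorem does not apply to it as written --- exactly the step you flag as ``the main difficulty'' and leave unproved. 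It can be filled by splitting the radical: $3a^{2}-11\,(11^{\ell})^{2}=1$ gives $a\sqrt{3}+11^{\ell}\sqrt{11}=(2\sqrt{3}+\sqrt{11})^{m}$ with $m$ odd (note $(2\sqrt{3}+\sqrt{11})^{2}=23+4\sqrt{33}$), so $11^{\ell}=(\alpha^{m}-\beta^{m})/(\alpha-\beta)$ for the Lehmer pair $\alpha=2\sqrt{3}+\sqrt{11}$, $\beta=2\sqrt{3}-\sqrt{11}$, which is admissible since $(\alpha+\beta)^{2}=48$ and $\alpha\beta=1$ are coprime; because $11\mid(\alpha-\beta)^{2}=44$, the prime $11$ can never be a primitive divisor, so BHV for $m$ beyond the explicit bound plus a finite check of the small odd $m$ (your $1,47,2161,99359,\dots$, none a power of $11$) finishes the case. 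With that passage made explicit your argument is complete; as it stands, the BHV step is asserted rather than proved, while the rest (including the class-number-one description of the solutions of $A^{2}-33C^{2}=3$, which matches the paper's) is sound.
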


\begin{proof}
As $x$ and $y$ are coprime and $11^{2k}\equiv 1$ $\pmod 4$, we get $x$ is
even in 
\begin{equation}
(x+i11^{k})(x-i11^{k})=y^{3}.  \tag*{(3.1)}  \label{3.1}
\end{equation}%
Hence $x+i11^{k}$ and $x-i11^{k}$ are coprime in $%
\mathbb{Z}
\lbrack i]$ which is a UFD. As the only units of $%
\mathbb{Z}
\lbrack i]$ are $\pm 1,\pm i$, we get%
\begin{equation}
x+i11^{k}=(u+iv)^{3};~~x-i11^{k}=(u-iv)^{3}.  \tag*{(3.2)}  \label{3.2}
\end{equation}%
Eliminating $x$, we get $2i11^{k}=(u+iv)^{3}-(u-iv)^{3}~$or $%
11^{k}=v(3u^{2}-v^{2})$. Note that $u$ and $v$ are coprime since otherwise
any prime factor of $u$ and $v$ will also divide both $x$ and $y$. Therefore 
$v=\pm 1$ or $v=\pm 11^{k}$, which lead to%
\begin{equation}
3u^{2}=1\pm 11^{k},~3u^{2}=\pm 1+11^{2k},  \tag*{(3.3)}  \label{3.3}
\end{equation}%
respectively. First equation is impossible as if the sign is $-$, then right
hand side is negative, while if the sign is $+$ and $k$ is even, then right
hand side is congruent to $2$ modulo $3$ while left hand side is divisible
by $3$. Finally if the sign is $+$ and $k$ is odd, this equation has only
one solution. Let's write $m=(k-1)/2$, $X=3u$, $Y=11^{m}.$Then the equation
becomes a Pell equation with an additional condition, namely $%
X^{2}-33Y^{2}=3,$ with $Y=11^{m}.$ Then $X+\sqrt{33}Y=(6+\sqrt{33})(23+4%
\sqrt{33})^{r}$. So $Y=\pm y_{r},$ where $(y_{r})$ is given by $%
y_{-1}=-1,~y_{0}=1,~y_{r+1}=46y_{r}-y_{r-1}.~$This sequence is symmetric
about $r=-1,2.$ As we are interested in $y_{r}=\pm 11^{m},~$we look at the
sequence in modulo $11$: $-1,1,3,5,-4,-2,0,2,4,-5,-3,-1,1...$, with a period
of length $11$. Thus $11|y_{r}$ if and only if $r\equiv 5~(\func{mod}11).~$%
But any other prime that divides $y_{5}=210044879$ will also divide any $%
y_{r}$ with $r\equiv 5~(\func{mod}11)$. As $y_{5}=210044879=11.373.51193,$
we find that $r\equiv 5~(\func{mod}11)~$implies $373|y_{r}~$and $51193|y_{r}$%
. Thus $m=0$ is the only possibility for $y_{r}=\pm 11^{m}$. From here,$\
u=\pm 2,$ $v=1,k=1$ and so $(x,y,k)=(2,5,1).$ For the second equation, the
sign must be $-1$. Thus $(11^{k})^{2}-3u^{2}=1$. $X^{2}-3Y^{2}=1$ has a
smallest solution $(X_{1},Y_{1})=(2,1)$. Furthermore $(X_{2},Y_{2})=(7,4)$
and $(X_{3},Y_{3})=(26,15).$ $(X_{m})$ is a Lucas sequence of second type.
By Primitive Divisor Theorem, \cite{Carmichael}, if $m>12$, then $X_{m}$ has
a prime factor $p\equiv 1~\pmod m$. In particular, $X_{m}$ can not be a
power of $11$ if $m>12$. One can check that $m\leq 12$ such that $X_{m}$ can
not be a power of $11$.
\end{proof}

\section{\protect\normalsize The Case\ When n=4}

\begin{lemma}
\label{lemma (n=4)}Equation \ref{1.1} has no solution for $n=4$.
\end{lemma}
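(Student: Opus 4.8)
The plan is to exploit the fact that for $n=4$ the right-hand side of \ref{1.1} is a perfect square, so the equation becomes a difference of two squares over $\mathbb{Z}$ and no Gaussian-integer machinery is needed. By the reduction carried out in Section 2 it suffices to rule out primitive solutions; so assume $\gcd(x,y)=1$ and write
\begin{equation*}
(y^{2}-x)(y^{2}+x)=11^{2k}.
\end{equation*}
Since $y^{4}>x^{2}$ we have $0<y^{2}-x<y^{2}+x$. Any common divisor $d$ of the two factors divides their sum $2y^{2}$ and their difference $2x$, hence divides $2\gcd(x,y^{2})=2$; but the product is odd, so $d=1$. Thus the two coprime factors are powers of $11$, and ordering them forces $y^{2}-x=1$ and $y^{2}+x=11^{2k}$.

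Adding these relations gives the single equation $2y^{2}=1+11^{2k}$ with $k\geq 1$. I would finish by a congruence: reducing modulo $11$ yields $2y^{2}\equiv 1\pmod{11}$, i.e. $y^{2}\equiv 6\pmod{11}$, which is impossible since the nonzero quadratic residues modulo $11$ are $1,3,4,5,9$ and $6$ is not among them. Hence \ref{1.1} has no primitive solution with $n=4$, and by Section 2 no solution at all. (Alternatively one can rewrite $2y^{2}=1+11^{2k}$ as the Pell-type equation $(11^{k})^{2}-2y^{2}=-1$ and rule out $11^{k}$ occurring as an $X$-coordinate, either by applying the same congruence to the recurrence for its solutions or by a Primitive Divisor Theorem argument in the spirit of the $n=3$ case; the congruence route is the shortest.)

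I do not expect a genuine obstacle. Two points deserve a little care: first, the coprimality of $y^{2}-x$ and $y^{2}+x$, where it is essential that $\gcd(x,y)=1$ rather than merely $11\nmid x$ — if one wished to bypass Section 2, the possibility $y^{2}-x=11^{a}$ with $a\geq 1$ would have to be eliminated by hand (it forces $11\mid x$ and $11\mid y$, i.e. non-primitivity, or else $x=0$); second, the verification that $6$ is a quadratic non-residue modulo $11$, which is the real heart of the argument.
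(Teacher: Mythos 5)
Your proof is correct, but after the common first step it finishes by a genuinely different route from the paper. Both arguments factor $11^{2k}=(y^{2}-x)(y^{2}+x)$, establish coprimality of the factors (your version, $d\mid 2\gcd(x,y^{2})=2$ together with the oddness of $11^{2k}$, is in fact a little more careful than the paper's appeal to the parities of $x$ and $y$, since coprimality of $x$ and $y$ is what is really needed), and conclude $y^{2}-x=1$, $y^{2}+x=11^{2k}$. At that point the paper passes to the Pell equation $X^{2}-2Y^{2}=-1$ with $X=11^{k}$, lists the first $X$-coordinates, and rules out $X_{m}$ being a power of $11$ by a finite check for $m\le 12$ plus Carmichael's Primitive Divisor Theorem \cite{Carmichael} for $m>12$ --- i.e.\ it re-uses the machinery of the $n=3$ case. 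You instead add the two equations to get $2y^{2}=1+11^{2k}$ and reduce modulo $11$ (legitimate because $k\ge 1$ in \ref{1.1}), obtaining $y^{2}\equiv 6\pmod{11}$, which is impossible since the nonzero quadratic residues modulo $11$ are $1,3,4,5,9$. This is shorter, entirely elementary, and avoids the primitive-divisor input; what the paper's route buys is uniformity with the $n=3$ argument (and it does not use the hypothesis $k\ge 1$). Like the paper, you rely on the Section 2 reduction so that excluding primitive solutions yields the lemma as stated, and your parenthetical remarks (handling $y^{2}-x=11^{a}$ with $a\ge 1$ if one bypasses that reduction, and verifying that $6$ is a non-residue) cover exactly the points that need care; I see no gap.
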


\begin{proof}
Now we rewrite equation \ref{1.1} as $11^{2k}=(y^{2}+x)(y^{2}-x)$. Since $x$
is even and $y$ is odd, we have that $y^{2}+x$ and $y^{2}-x$ are coprime.
Thus%
\begin{equation}
y^{2}-x=1;~~~y^{2}+x=11^{2k},  \tag*{(4.1)}  \label{4.1}
\end{equation}%
which leads to $(11^{k})^{2}-2y^{2}=-1$. \ Equation \ref{4.1} gives a
solution $(X,Y)$ to Pell equation $X^{2}-2Y^{2}=\pm 1$ with $X=11^{k}$ and $%
Y=y$. The first solution of equation \ref{4.1} is $(X_{1},Y_{1})=(1,1)$.
Further $X_{2}=3$, $X_{3}=7$ and $X_{4}=17$. By checking $X_{m}$ for all $%
\leq 12$ and invoking the Prime Divisor Theorem for $m>12$, we get that $%
X_{m}$ can not be a power of $11$.
\end{proof}

\section{\protect\normalsize The Remaining Cases}

If $(x,y,k,n)$ is a primitive solution to \ref{1.1} and $d>2$ divides $n$,
then $(x,y^{n/d},k,d)$ is also a primitive solution of \ref{1.1}. Since $%
n\geq 3$ is coprime to $3$ and not a multiple of $4,$ there is a prime $%
p\geq 5$ dividing $n$. Replace $n$ by this prime. Look again at $%
(x+i11^{k})(x-i11^{k})=y^{p}$. Since $x$ is even and $y$ is odd, we get that 
$x+11^{k}i$ and $x-11^{k}i$ are coprime in $%
\mathbb{Z}
\lbrack i]$. Then there exist $u$ and $v$ so that if $\alpha =u+iv$, then $%
x+i11^{k}=\alpha ^{p}~$and$~x-i11^{k}=\overset{\_}{\alpha }^{p}.~$Hence%
\begin{equation}
\frac{11^{k}}{v}=\frac{\alpha ^{p}-\overset{\_}{\alpha }^{p}}{\alpha -%
\overset{\_}{\alpha }}\in 
\mathbb{Z}
\text{. }  \tag*{(5.1)}  \label{5.1}
\end{equation}%
$u_{n}=(\alpha ^{n}-\overset{\_}{\alpha }^{n})/(\alpha -\overset{\_}{\alpha }%
)$ for all $n\geq 0$ is a Lucas sequence. A prime factor $q$ of $u_{n}$ is
called \textit{primitive} if $q\nmid u_{n}$ for any $0<k<n$ and $q\nmid
(\alpha -{\overline{\alpha }})^{2}=-4v^{2}$. If~such a $q~$exists, then $%
q\equiv \pm 1\pmod n$, where the sign coincides with the Legendre symbol $%
(-1\mid q)$. By $\cite{Bilu}$, we know that if $n\geq 5$ is prime, then $%
u_{n}$ always has a prime factor except for finitely many \textit{%
exceptional triples~}$(\alpha ,{\overline{\alpha }},n)$, and all of them
appear in the Table 1 in $\cite{Bilu}.$

Let $u_{n}~$be without a primitive divisor. Table 1 reveals that there is 
\textit{no defective} Lucas number $u_{n}$ with roots $\alpha ,\overset{\_}{%
\alpha }$ in$~%
\mathbb{Z}
\lbrack i]$.~

Since $n\geq 5$ is prime, it follows that $11$ is primitive for $u_{n}$.
Thus $11\equiv +1\pmod 5$. But since $(-1\mid q)=-1,~$then $11$ can't be a
primitive divisor. Thus, there are no more primitive solutions to our
equation.

\section{\protect\normalsize Further Remarks and Observations}

The Dirichlet $L$-functions relate certain Euler products to various objects
such as Diophantine equations, representations of Galois group, Modular
forms etc. These functions play a crucial role not only in complex analysis
but also in number theory. The $p$-adic $L$-function agrees with the
Dirichlet $L$-functions at negative integers. $p$-adic $L$-function can be
used to prove congruences for generalized Bernoulli numbers. It is
well-known that following Diophantine equation is related to Bernoulli
polynomials $B_{n}(x)$ 
\begin{equation}
aB_{n}^{{}}(x)=bB_{m}(x)+C(y)\text{, }a,b\in \mathbb{Q}\setminus \left\{
0\right\}  \tag*{(6.1)}  \label{6.1}
\end{equation}%
with $n\geq m>\deg (C)+2~$and for a rational polynomial $C(y)$.

Following are some open problems: How can we generalize such a Diophantine
equation to twisted Bernoulli, Euler and generalized Bernoulli polynomials
attached to Dirichlet character? What is the relation between \ref{6.1}, $p$%
-adic $L$-function and Kummer congruences for Bernoulli numbers? How can one
determine cyclotomic units of \ref{1.1} and Lemma \ref{lemma (n=3)}? Are
there relations between Lucas, Lehmer, Bernoulli and Euler numbers, and \ref%
{6.1}.

\begin{acknowledgement}
The paper is supported by Uludag Univ. Research Fund, Projects 2008-31 and
2008-51, and Akdeniz Univ. Administration.
\end{acknowledgement}

\bigskip

Ismail Naci Cangul, Uludag Univ., Bursa, Turkey, cangul@uludag.edu.tr

Gokhan Soydan, Isiklar High School, Bursa, TURKEY, gsoydan@uludag.edu.tr

Yilmaz Simsek, Akdeniz Univ., Antalya, Turkey, ysimsek@akdeniz.edu.tr

\end{document}